\documentclass[12pt]{amsart}
\setlength{\parindent}{0in}
\setlength{\parskip}{0.1in}
\setlength{\oddsidemargin}{0in}
\setlength{\evensidemargin}{0in}
\setlength{\textwidth}{6.5in}
\setlength{\textheight}{9.in}
\setlength{\topmargin}{-0.5in}

\usepackage{color}
\usepackage{amssymb}
\makeatletter
\@namedef{subjclassname@2020}{\textup{2020} Mathematics Subject Classification} 
\makeatother
\newcommand{\Ric}{\textit{Ric}}

\newcommand{\tr}{\operatorname{tr}}
\newcommand{\RPn}{\mathbb R \mathbb P^n}

\newtheorem{theorem}{Theorem}[section]
\newtheorem{lemma}[theorem]{Lemma}
\newtheorem{proposition}[theorem]{Proposition}
\newtheorem{corollary}[theorem]{Corollary}

\theoremstyle{definition}
\newtheorem{definition}[theorem]{Definition}
\newtheorem{example}[theorem]{Example}

\theoremstyle{remark}
\newtheorem{remark}[theorem]{Remark}

\numberwithin{equation}{section}

\begin{document}
\title{On a constant curvature statistical manifold}
\author{Shimpei Kobayashi$^*$}
\address{Department of Mathematics, Hokkaido University, 
Sapporo, 060-0810, Japan}
\email{shimpei@math.sci.hokudai.ac.jp}
\thanks{The first named author is partially supported by Kakenhi 
18K03265. }
\keywords{Statistical manifolds; constant curvatures; conjugate symmetries; projective 
 flatness; properly convex structures}
\author{Yu Ohno}
\email{floodcaptain@gmail.com}
\thanks{}
\subjclass[2020]{Primary:~53B12,~53C15}

\dedicatory{}
\begin{abstract}
 We will show that a statistical manifold $(M, g, \nabla)$ has a constant curvature 
 if and only if it is a projectively flat conjugate symmetric manifold, 
 that is, the affine connection $\nabla$ is projectively flat 
 and the curvatures satisfies $R=R^*$, where $R^*$ is the curvature 
 of the dual connection $\nabla^*$.
 Moreover, we will show that properly convex structures 
 on a projectively flat compact manifold
 induces constant curvature $-1$ statistical structures and vice versa.
\end{abstract}

\maketitle
\section*{Introduction}
 On a pseudo-Riemannian manifold $(M, g)$, consider a torsion-free 
 affine connection $\nabla$
 on $(M, g)$ and a $(0, 3)$-tensor field $C$ defined by 
 \begin{equation}\label{eq:C}
 C (X, Y, Z) = (\nabla_X g) (Y, Z).
 \end{equation}
 This pair $(g, \nabla)$ is called a \textit{statistical structure}
 if $C$ is totally symmetric, and the tensor $C$ will be called  
 the \textit{cubic form} or the \textit{cubic tensor}. 
  A pseudo-Riemannian manifold $(M, g)$ with a statistical structure $(g, \nabla)$ will be called 
 the \textit{statistical manifold} and it will be denoted by a triad $(M, g, \nabla)$.
 It is important to consider the \textit{dual} torsion-free affine connection $\nabla^*$
 for a statistical manifold 
 $(M, g, \nabla)$ defined by 
\begin{equation}\label{eq:dualnablas}
  X g (Y, Z ) = g (\nabla_X Y, Z) + g (Y, \nabla^*_X Z). 
\end{equation}
 Note that $\nabla = \nabla^*$ if and only if $\nabla$ is the Levi-Civita connection of the metric $g$. Thus the statistical manifold is a natural generalization of a 
 pseudo-Riemannian manifold with the Levi-Civita connection, which 
 gives a trivial statistical structure. For the theory of statistical manifolds, 
 we refer the readers to \cite{AN}.

 For a statistical manifold $(M, g, \nabla)$ let $R$ denote the curvature tensor field of  $\nabla$, and it is said to be of \textit{constant curvature} $k$ if 
 \begin{equation}\label{eq:constantcurv}
 R(X, Y) Z = k \left\{ g(Y, Z) X - g(X, Z)Y \right\}
 \end{equation}
 holds for any vector fields $X, Y$ and $Z$ and some real constant $k$, 
  see \cite{Kur}. 
 The statistical manifold $(M, g, \nabla)$ is 
 said to be \textit{conjugate symmetric} if
\begin{equation}\label{eq:RR*}
 R = R^* 
\end{equation}
 holds,  where $R^*$ is the curvature of $\nabla^*$, see \cite{Lau}.
 Moreover, it is well known that \textit{projective flatness} is 
 a fundamental notion in projective differential geometry \cite{Ei}. 
 The projective flatness is defined on the equivalence classes of affine connections on a
 manifold $M$, see Definition \ref{def:pflat}.
 A torsion-free projective flat connection $\nabla$ on a manifold  
 defines a projective structure and it has been called 
 a \textit{$\RPn$-structure}. 

  In this paper, we will first show that a statistical manifold is constant curvature 
 if and only if it is a projectively flat conjugate symmetric manifold 
(or a projectively flat conjugate Ricci symmetric manifold, that is, 
 $\Ric = \Ric^*$), Theorem \ref{thm:Charactconst}.
 Moreover, we will discuss a natural family of affine connections, 
 the so-called \textit{$\alpha$-connections} introduced in information 
 geometry \cite{AN} and 
 will show that almost all members of $\alpha$-connections 
 are not constant curvature, Proposition 
 \ref{prp:alphafamily}.

 Next we recall the basic definitions of properly convex 
 $\RPn$-structures,  
 see \cite{Gold, Lab} for more details.
 For a $\RPn$-structure (induced from a projectively flat connection $\nabla$) 
 on a manifold $M$,
  the \textit{holonomy representation}
 $\rho$ can be defined, which is a map from the fundamental 
 group $\pi_1 (M)$ to the projective group 
 $\mathrm{PSL}_{n+1} \mathbb R$.
 Accordingly the \textit{developing map} follows, which is a local diffeomorphism $f$
 from the universal cover $\widetilde M$ taking values in $\RPn$ such that 
 it is $\rho$-equivariant, that is, for any $x \in \widetilde M$ and any $\gamma \in 
 \pi_1 (M)$, 
 $f(\gamma x) = \rho(\gamma) f(x)$
 holds.
 Finally a $\RPn$-structure is called \textit{convex} if 
 the developing map is a homeomorphism to a convex set in $\RPn$ and it is 
 called \textit{properly convex} if this 
 convex set is included in a compact convex set of an affine chart. 
 It has been known that (properly) convex $\RPn$-structures 
 are fundamental 
 and important geometric structures on a manifold.

 We will then characterize the \textit{properly convex $\RPn$-structures} on a compact 
 manifold $M$, see Theorem \ref{thm:projflat}, that is, 
 such structures can be constructed from constant curvature $-1$ statistical 
 structures on $M$.  This theorem is a 
 reformulation of \cite[Theorem 3.2.1]{Lab}, in terms of 
 a constant statistical structure on $M$.
 In Appendix \ref{sc:pcflat}, we will give a sketch of the proof of Theorem 
 \ref{thm:projflat}.

 It is known that notion of statistical manifolds has also grown out of theory of 
 affine immersions. The Codazzi equation of an 
 affine immersion defines a statistical structure, and many of studies of 
 statistical manifolds relate to it, for example, constant curvature 
 statistical manifolds give affine spheres, see \cite{Kur}.
 However, in this paper, we emphasize an intrinsic characterization of 
 a constant curvature statistical manifold in terms of properties of 
 the affine connection $\nabla$ and its curvature 
 without any relation to affine immersions. 
 As a result of it, we can see a direct connection between 
 constant curvature statistical structures and 
 properly convex $\RPn$-structures on a manifold.
 
\textbf{Acknowledgement:} We would like to thank Prof. Jun-ichi Inoguchi 
 and Porf. Hitoshi Furuhata for comments on the manuscripts and 
 letting us know several related references. Moreover, we would  like to 
 thank the four anonymous reviewers for their suggestions, comments and additional 
 references.

%
%

\section{Tensor analysis for a statistical manifold}
 In this section, we recall the basic facts about a statistical manifold, see 
  for examples \cite{AN, CU, Op, NS, BNS, USA}. 
\subsection{Preliminaries}
 Let $\nabla$ be a torsion-free affine connection on 
 a pseudo-Riemannian  manifold $(M, g)$ and let $\nabla^*$ be the 
 dual torsion-free affine connection in the sense of \eqref{eq:dualnablas} . 
 Then it is easy to see that 
\[
 - C (X, Y, Z) = (\nabla_X^* g) (Y, Z)
 \] 
  holds and 
  thus $(g, \nabla)$ is a statistical structure if and only if $(g, \nabla^*)$ is.
 For a statistical manifold $(M, g, \nabla)$, we define a tensor field $K$ of type $(1, 2)$ by 
\begin{equation}\label{eq:K}
K(X, Y)  =  \nabla_X  Y- \widehat \nabla_X Y
 \end{equation}
 and it will be called the \textit{difference tensor},
 where $\widehat \nabla$ is the Levi-Civita connection of the pseudo-Riemannian 
 metric $g$.
 Moreover, defining the $(1, 1)$-tensor $K_X$ by 
 $K_X Y = K(X, Y)$, we have
 \begin{equation}\label{eq:K2}
 K_X = \nabla_X  - \widehat \nabla_X.
 \end{equation}
 Since $\nabla$ and $\widehat \nabla$ are torsion-free, thus 
 $K(X, Y)$ is symmetric.
 Then from the compatibility of $g$, that is $\widehat \nabla g = 0$,  
 we have $(\nabla_X g)(Y, Z) = (K_X g)(Y, Z)$. We also compute
 \begin{equation*}
(K_X g) (Y, Z)  =  - g (K_X Y, Z) - g (Y,  K_X Z).
\end{equation*}
From the definition of $C$ in \eqref{eq:C} and the symmetry of $C$, we
have 
\begin{equation}\label{eq:relationCK}
    C (X, Y, Z)  = -2 g(K(X, Y), Z).
\end{equation}
 The relation $- C (X, Y, Z) = (\nabla_X^* g) (Y, Z)$ also implies 
\begin{equation}\label{eq:dual}
    K_X    = - \nabla_X^* + \widehat \nabla_X.
\end{equation}
 Therefore the Levi-Civita connection $\widehat \nabla$ is the mean of $\nabla$ and its dual $\nabla^*$:
 \[
  \widehat \nabla_X Y = \frac12 (\nabla_X Y  + \nabla_X^* Y).
 \]
   Moreover,  from \eqref{eq:K} and $\widehat \nabla g = 0$, it is easy to see that 
 \begin{equation}\label{eq:covderi}
 (\widehat\nabla_X C) (Y, Z, W) = -2 g ( (\widehat \nabla_X K) (Y, Z), W )
 \end{equation}
 holds.  We now characterize the total symmetry of the covariant derivative $\widehat 
\nabla C$ 
 of the cubic form $C$ as follows.
 \begin{lemma}[Lemma 1 in \cite{BNS}]\label{lem:Csymmet}
  For a statistical manifold $(M, g, \nabla)$, the followings are mutually equivalent$:$
 \begin{enumerate}
 \item  $\nabla C $ is totally symmetric. \label{itm:nablaCsym}
 \item  $\widehat \nabla C $ is totally symmetric. \label{itm:hatnablaCsym}
 \item  $\widehat \nabla K  $ is totally symmetric. \label{itm:hatnablaKsym}
 \end{enumerate}
 \end{lemma}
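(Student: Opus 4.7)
The plan is to reduce all three conditions to a symmetry statement about a single $(0,4)$-tensor on $M$, using \eqref{eq:K2}, \eqref{eq:relationCK} and \eqref{eq:covderi} as the main ingredients.

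The equivalence (\ref{itm:hatnablaCsym})$\Leftrightarrow$(\ref{itm:hatnablaKsym}) is immediate from \eqref{eq:covderi}: since $g$ is non-degenerate and $\widehat{\nabla} g = 0$, pairing with $g$ via the identity $(\widehat{\nabla}_X C)(Y,Z,W) = -2 g((\widehat{\nabla}_X K)(Y,Z), W)$ is a bijective, symmetry-preserving correspondence between the relevant tensors. Using the total symmetry of $C$ (hence of $K$ as well), both $\widehat{\nabla} C$ and $\widehat{\nabla} K$ are already symmetric in the slots they inherit from $C$ and $K$, so the only nontrivial total-symmetry condition is the exchange of the differentiation variable with the first tensor argument, and \eqref{eq:covderi} transfers this exchange between the two sides.

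For (\ref{itm:nablaCsym})$\Leftrightarrow$(\ref{itm:hatnablaCsym}), I would examine the difference tensor
\[
\Delta(X,Y,Z,W) := (\nabla_X C)(Y,Z,W) - (\widehat{\nabla}_X C)(Y,Z,W).
\]
From \eqref{eq:K2}, $\nabla_X - \widehat{\nabla}_X = K_X$ acts as a derivation on the $(0,3)$-tensor $C$, so
\[
\Delta(X,Y,Z,W) = -C(K(X,Y),Z,W) - C(Y,K(X,Z),W) - C(Y,Z,K(X,W)).
\]
The key observation is that total symmetry of $C$, combined with \eqref{eq:relationCK}, implies that $(A,B,D) \mapsto g(K(A,B),D)$ is totally symmetric in its three arguments. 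Substituting \eqref{eq:relationCK} into each summand and using this triple symmetry to pull the inner $K$-output into the outer slot yields
\[
\Delta(X,Y,Z,W) = 2\bigl\{ g(K(X,Y),K(Z,W)) + g(K(X,Z),K(Y,W)) + g(K(X,W),K(Y,Z)) \bigr\}.
\]
The right-hand side is the sum over the three partitions of $\{X,Y,Z,W\}$ into unordered pairs and is therefore invariant under the full symmetric group $S_4$. Hence $\Delta$ is totally symmetric, and $\nabla C$ and $\widehat{\nabla} C$ are totally symmetric together.

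The only nontrivial step is the algebraic rewriting of $\Delta$ into the balanced pairing form above, which is a routine substitution once the triple symmetry of $g(K(\cdot,\cdot),\cdot)$ has been noted. I do not anticipate any substantive obstacle: the three identities \eqref{eq:K2}, \eqref{eq:relationCK} and \eqref{eq:covderi} already recorded in the excerpt do essentially all the work.
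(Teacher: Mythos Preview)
Your argument is correct and follows essentially the same route as the paper: both compute the difference $\Delta = (\nabla_X C)(Y,Z,W) - (\widehat{\nabla}_X C)(Y,Z,W)$ via $\nabla_X - \widehat{\nabla}_X = K_X$ and then use \eqref{eq:relationCK} to exhibit its symmetry. The only cosmetic difference is that the paper converts just two of the three $C$-terms and checks symmetry in the first two slots, whereas you convert all three and obtain the manifestly $S_4$-invariant pairing expression; your version is a mild tidying of the same idea.
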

 \begin{remark}
 Lemma \ref{lem:Csymmet} has been proved in \cite{BNS} for the statistical structure 
 induced by Blaschke hypersurfaces of affine differential geometry.
 The proof can be easily generalized 
 to any statistical manifold. In statistical manifolds point of view, 
 Blaschke hypersurfaces correspond to the trace-free structures of 
 difference tensors $K$, see \eqref{eq:tracefree}.
 \end{remark}
 Let $R$ denote the curvature tensor field of the connection $\nabla$, that is,
 \begin{equation}\label{eq:Curvaturetensor}
  R (X, Y) Z = \nabla_X \nabla_Y Z - \nabla_Y \nabla_X Z - \nabla_{[X, Y]} Z,
 \end{equation}
 and let $R^*$ denote the curvature tensor field of its dual connection $\nabla^*$.
 Moreover, let $\widehat R$ denote the curvature tensor field of the Levi-Civita connection 
 $\widehat \nabla$.
  \begin{lemma}[Proposition 9.1 in \cite{NS}]\label{lem:Curvatures}
  For a statistical manifold  $(M, g, \nabla)$ the following identities hold$:$
   \begin{align}\label{eq:R}
R (X, Y) &= \widehat R (X, Y)  +(\widehat \nabla_X K)_Y - (\widehat \nabla_Y K)_X + [K_X, K_Y],       \\
\nonumber & = \widehat R (X, Y)  +(\nabla_X K)_Y - (\nabla_Y K)_X - [K_X, K_Y],       \\
\label{eq:Rstar}
R^* (X, Y) &= \widehat R (X, Y)  -(\widehat \nabla_X K)_Y + (\widehat \nabla_Y K)_X + [K_X, K_Y], \\
\nonumber & = \widehat R (X, Y)  -(\nabla_X K)_Y + (\nabla_Y K)_X +3 [K_X, K_Y].
   \end{align}
   Moreover, the following identities also hold$:$
   \begin{align}
\frac12R(X, Y)- \frac12R^*(X, Y) &=(\widehat \nabla_X K)_Y - (\widehat \nabla_Y K)_X ,   
\label{eq:RicRic*0} \\
 \nonumber &= (\nabla_X K)_Y - (\nabla_Y K)_X -2[K_X, K_Y],    \\
\frac12R(X, Y)+ \frac12R^*(X, Y) &= \widehat R (X, Y) + [K_X, K_Y].
   \end{align}
  \end{lemma}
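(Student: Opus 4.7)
The plan is to derive everything from the two decompositions $\nabla_X = \widehat\nabla_X + K_X$ from \eqref{eq:K2} and $\nabla^*_X = \widehat\nabla_X - K_X$ from \eqref{eq:dual}, and then pass back and forth between the two covariant differentiations of $K$ using its symmetry in the two arguments. First, I would substitute $\nabla_X = \widehat\nabla_X + K_X$ into the curvature definition \eqref{eq:Curvaturetensor} and expand, grouping the resulting nine terms into three classes: pure $\widehat\nabla$-terms which rebuild $\widehat R(X,Y)$; cross terms which, after using the torsion-freeness of $\widehat\nabla$ to absorb the $\widehat\nabla_{[X,Y]}$ piece, recombine into the covariant-derivative difference $(\widehat\nabla_X K)_Y - (\widehat\nabla_Y K)_X$; and pure $K$-terms which form the commutator $[K_X, K_Y]$. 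This yields the first form of \eqref{eq:R}. The same computation with $K$ replaced by $-K$ gives the first form of \eqref{eq:Rstar}, noting that the quadratic piece $[(-K_X),(-K_Y)] = [K_X, K_Y]$ preserves its sign while the linear pieces flip.

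To obtain the second forms in \eqref{eq:R} and \eqref{eq:Rstar}, I would establish a transfer identity between $\widehat\nabla K$ and $\nabla K$. Expanding $(\nabla_X K)(Y,Z) = \nabla_X(K(Y,Z)) - K(\nabla_X Y, Z) - K(Y, \nabla_X Z)$ and substituting $\nabla_X = \widehat\nabla_X + K_X$ yields the pointwise formula
\[
(\nabla_X K)_Y Z - (\widehat\nabla_X K)_Y Z = [K_X, K_Y]Z - K(K_X Y, Z).
\]
The crucial point is that $K(X,Y) = K(Y,X)$, so antisymmetrizing in $X, Y$ makes $K(K_X Y, Z) - K(K_Y X, Z)$ vanish, leaving
\[
(\widehat\nabla_X K)_Y - (\widehat\nabla_Y K)_X = (\nabla_X K)_Y - (\nabla_Y K)_X - 2[K_X, K_Y].
\]
Substituting this into the first forms of \eqref{eq:R} and \eqref{eq:Rstar} then produces their second forms, with the $-[K_X,K_Y]$ and $+3[K_X,K_Y]$ coefficients emerging from the arithmetic $+1-2 = -1$ and $+1+2 = +3$.

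Finally, the sum and difference identities in \eqref{eq:RicRic*0} are immediate algebraic consequences: adding the first forms of \eqref{eq:R} and \eqref{eq:Rstar} cancels the $\widehat\nabla K$ terms and doubles $\widehat R + [K_X, K_Y]$, while subtracting them cancels $\widehat R$ and $[K_X, K_Y]$ and doubles the $\widehat\nabla K$ difference. The alternate second-form expression for $\frac12(R - R^*)$ then follows by applying the transfer identity to the right-hand side. The main obstacle is the bookkeeping in the very first expansion: keeping the Leibniz rule, the torsion-free correction from $[X,Y]$, and the symmetry of $K$ straight so that $(\widehat\nabla_X K)_Y$ emerges in its intended tensorial form. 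After that step is performed cleanly, the remaining identities reduce to straightforward algebra driven by the symmetry of $K$ under antisymmetrization.
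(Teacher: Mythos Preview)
Your proposal is correct and follows essentially the same route as the paper's proof: expand $R(X,Y)=[\nabla_X,\nabla_Y]-\nabla_{[X,Y]}$ with $\nabla_X=\widehat\nabla_X+K_X$, regroup into $\widehat R+(\widehat\nabla_XK)_Y-(\widehat\nabla_YK)_X+[K_X,K_Y]$, and then use the transfer identity $(\nabla_XK)_Y=(\widehat\nabla_XK)_Y+[K_X,K_Y]-K_{K_XY}$ together with the symmetry $K_XY=K_YX$ to pass to the $\nabla K$ form. One small slip in your description: in the first expansion it is the $K_{[X,Y]}$ term (not $\widehat\nabla_{[X,Y]}$) that is absorbed into the cross terms via torsion-freeness of $\widehat\nabla$; the $\widehat\nabla_{[X,Y]}$ piece belongs to $\widehat R$.
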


 We now assume that $M$ is orientable and 
 take the pseudo-Riemannian volume form $\omega_g $ 
 on $(M, g)$:  
 \[
 \omega_g = \sqrt{|\det g|}\, dx_1 \wedge \cdots \wedge dx_n.
\]
 Then it is easy to see that the covariant derivative $\nabla_X$ of $\omega_g$ as
\[
\nabla_X \omega_g = \frac12 \tr_g (\nabla_X g) (\cdot,\cdot) \omega_g. 
\]
 From the symmetry of $C$ we see that $\tr_g (\nabla_X g) (\cdot,\cdot)  = \tr_g (\nabla_\cdot g) (\cdot,X)$,
  and the relation \eqref{eq:relationCK} and the self-adjointness of $K$ imply that  
\[  
\frac12 \tr_g (\nabla_\cdot g) (\cdot, X) = - g(\tr_g K(\cdot, \cdot), X) =  
- \tr K_X, 
\]
 and therefore we have 
\begin{equation}\label{eq:tau}
\nabla_X \omega_g = -\tau_g(X)  \omega_g 
\quad \mbox{with}\quad  \tau_g(X) =\tr K_X.
\end{equation}
 The Ricci curvature tensor $\Ric$ of $\nabla$ is defined by 
\[
 \Ric (Y, Z) =\tr \{ X \mapsto R(X, Y) Z\}.
\]
 Similarly, the Ricci curvature tensor  $\Ric^*$ (resp. $\widehat \Ric$)
 of $\nabla^*$ (resp. $\widehat \nabla$) can be defined analogously.
 \begin{lemma}[Section 3 in \cite{Op2}]\label{lem:Ric}
  For an orientable statistical manifold $(M, g, \nabla)$ with the $1$-form $\tau_g$ in \eqref{eq:tau},
  the following identities hold$:$
 \begin{align*}
 \Ric (Y, Z) & = \widehat \Ric (Y, Z) +(\operatorname{div}^{\widehat \nabla} K) (Y, Z)
   - (\widehat \nabla_Y \tau_g) (Z) + \tau_g (K_Y  Z) - g(K_Y, K_Z),  \\
   & = \widehat \Ric (Y, Z) +(\operatorname{div}^{\nabla} K) (Y, Z)
   - (\nabla_Y \tau_g) (Z) - \tau_g (K_Y  Z) + g(K_Y, K_Z), \\
\Ric^* (Y, Z) & = \widehat \Ric (Y, Z) - (\operatorname{div}^{\widehat \nabla} K) (Y, Z)
  +(\widehat \nabla_Y \tau_g) (Z) + \tau_g (K_Y Z) - g(K_Y, K_Z), \\
  &= \widehat \Ric (Y, Z) - (\operatorname{div}^{\nabla} K) (Y, Z)
  +(\nabla_Y \tau_g) (Z) + 3 \tau_g (K_Y Z) - 3 g(K_Y, K_Z).
 \end{align*}
  Moreover, the following identities also hold$:$
 \begin{align}
\frac12 \Ric (Y, Z)  -\frac12 \Ric^* (Y,Z&) \label{eq:RicRic*}  
 = (\operatorname{div}^{\widehat \nabla} K) (Y, Z) 
 - (\widehat \nabla_Y \tau_g) (Z), \\
 & = (\operatorname{div}^{\nabla} K) (Y, Z)
 - (\nabla_Y \tau_g) (Z)-2 \tau_g(K_Y Z) +2 g(K_Y, K_Z),\nonumber 
\end{align}
\begin{align}\hspace{-2cm}
 \frac12\Ric (Y, Z)  +\frac12\Ric^* (Y, Z) =\widehat \Ric (Y, Z) + \tau_g (K(Y, Z)) - g(K_Y, K_Z).
 \end{align}
\end{lemma}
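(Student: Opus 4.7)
The plan is to derive every identity in Lemma \ref{lem:Ric} by taking the trace of the corresponding curvature identities in Lemma \ref{lem:Curvatures} with respect to the first slot. Concretely, I would apply the operator $\tr\{X \mapsto (\cdot)\, Z\}$ to the four forms of $R(X,Y)$ and $R^*(X,Y)$ provided by \eqref{eq:R} and \eqref{eq:Rstar}, so that the left-hand sides become $\Ric(Y,Z)$ and $\Ric^*(Y,Z)$, respectively, and $\tr\{X\mapsto \widehat{R}(X,Y)Z\}=\widehat{\Ric}(Y,Z)$ produces the first term on the right.

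The next step is to identify each remaining term separately. The covariant-derivative terms $\tr\{X\mapsto(\widehat\nabla_X K)_Y Z\}$ and $\tr\{X\mapsto(\nabla_X K)_Y Z\}$ reproduce the divergences $(\operatorname{div}^{\widehat\nabla}K)(Y,Z)$ and $(\operatorname{div}^{\nabla}K)(Y,Z)$ straight from the definition. For the ``crossed'' terms $\tr\{X\mapsto(\widehat\nabla_Y K)_X Z\}$, I would use the symmetry $K(X,Z)=K(Z,X)$ together with the fact that a contraction commutes with any linear connection; this identifies the trace with $(\widehat\nabla_Y \tau_g)(Z)$, and analogously with $(\nabla_Y\tau_g)(Z)$ for the second version, since $\tau_g(Z)=\tr K_Z$ by \eqref{eq:tau}. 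For the commutator term, I expand $[K_X,K_Y]Z = K_X(K_Y Z) - K_Y(K_X Z)$ and handle the two summands separately: using the symmetry of $K$ again, $K_X(K_Y Z) = K_{K_YZ}X$, so tracing over $X$ yields $\tr K_{K_YZ}=\tau_g(K_YZ)$; for the second summand, the assignment $X\mapsto K_X Z = K_Z X$ is just the endomorphism $K_Z$, so $X\mapsto K_Y(K_X Z)$ is the composition $K_Y\circ K_Z$, whose trace is the expression denoted $g(K_Y,K_Z)$ in the paper (this being $\tr(K_YK_Z)$ and consistent with the self-adjointness of $K_X$ with respect to $g$).

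Assembling these four identifications gives the first line of each of the four expressions for $\Ric$ and $\Ric^*$, where the appearance of $-[K_X,K_Y]$ in the $\nabla$-version of \eqref{eq:R} and of $+3[K_X,K_Y]$ in the $\nabla$-version of \eqref{eq:Rstar} accounts for the sign change in the $\tau_g(K_YZ)$-term and the factor $3$ in the last expansion. The two remaining displayed identities are obtained simply by taking $\tfrac12$ times the difference and the sum of the first and third formulas (or equivalently the second and fourth), using $K(Y,Z)=K_YZ$ to rewrite $\tau_g(K_YZ)=\tau_g(K(Y,Z))$.

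The main obstacle is the bookkeeping: one must be careful that the contraction is taken over the correct slot and that the symmetry of $K$ is used consistently so that $\tr\{X\mapsto(\widehat\nabla_Y K)(X,Z)\}$ really equals $(\widehat\nabla_Y\tau_g)(Z)$, and similarly in the $\nabla$-version one must verify that $\tr$ still commutes with $\nabla$ even though $\nabla$ does not preserve $\omega_g$ (it does, since contraction is a purely algebraic operation on tensor factors and commutes with every linear connection). Once these two points are pinned down, the remainder of the argument is a direct substitution from Lemma \ref{lem:Curvatures}.
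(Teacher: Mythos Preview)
Your proposal is correct and follows essentially the same approach as the paper: both proofs take the trace over the first slot of the curvature identities in Lemma~\ref{lem:Curvatures} and identify the resulting four pieces (divergence, $\widehat\nabla_Y\tau_g$, $\tau_g(K_YZ)$, and $g(K_Y,K_Z)$) one by one, then obtain the sum/difference formulas by combining the results. The only cosmetic difference is that the paper carries out the trace via an orthonormal frame with $\widehat\nabla e_i=0$ at a point, whereas you invoke the abstract fact that contraction commutes with any linear connection; these are equivalent formulations of the same computation.
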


  From Lemma \ref{lem:Ric}, it is clear that the Ricci curvature of a statistical 
  manifold is not symmetric in general. 
 We now recall that a torsion-free affine connection 
  is called \textit{locally equiaffine} (resp. \textit{equiaffine})
 if there exists a volume form $\omega$
 around any point on $M$ (resp. a volume form $\omega$ on $M$)
 such that $\nabla \omega =0$. In particular if the volume form
 is the pseudo-Riemannian volume 
  form $\omega_g$, then $\nabla \omega_g =0$ is equivalent to 
\begin{equation}\label{eq:tracefree}
 \tr K_X =0,
\end{equation}
 and the condition \eqref{eq:tracefree} will be called \textit{trace-free}.
  \begin{corollary}\label{cor:Ric}
 The Ricci curvature of the connection $\nabla$ of a statistical manifold $(M, g, \nabla)$ is symmetric 
  if and only if it is locally equiaffine.
  \end{corollary}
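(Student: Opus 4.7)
The plan is to locate the obstruction to symmetry of $\Ric$ inside the explicit formula of Lemma \ref{lem:Ric} and then identify it with the obstruction to the existence of a local parallel volume form. First I would skew-symmetrize the identity
\[
\Ric(Y,Z)=\widehat\Ric(Y,Z)+(\operatorname{div}^{\widehat\nabla}K)(Y,Z)-(\widehat\nabla_Y\tau_g)(Z)+\tau_g(K_YZ)-g(K_Y,K_Z)
\]
in $(Y,Z)$. The Levi-Civita Ricci $\widehat\Ric$ is symmetric; $(\operatorname{div}^{\widehat\nabla}K)(Y,Z)$ is symmetric because $K(Y,Z)$ is symmetric in its lower arguments and $\widehat\nabla$ commutes with the contraction; $\tau_g(K_YZ)=\tau_g(K(Y,Z))$ is symmetric; and $g(K_Y,K_Z)$, interpreted as $\tr(K_YK_Z)$, is symmetric since \eqref{eq:relationCK} makes each $K_X$ self-adjoint with respect to $g$. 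Hence the only antisymmetric contribution comes from $-(\widehat\nabla_Y\tau_g)(Z)$, and torsion-freeness of $\widehat\nabla$ gives
\[
\Ric(Y,Z)-\Ric(Z,Y)=-\bigl((\widehat\nabla_Y\tau_g)(Z)-(\widehat\nabla_Z\tau_g)(Y)\bigr)=-d\tau_g(Y,Z).
\]
So $\Ric$ is symmetric if and only if $d\tau_g=0$.

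Next I would translate the condition $d\tau_g=0$ into local equiaffineness. For any nowhere-vanishing smooth local function $h$, formula \eqref{eq:tau} and the Leibniz rule yield
\[
\nabla_X(h\,\omega_g)=\bigl(X(h)-h\,\tau_g(X)\bigr)\omega_g,
\]
so $h\,\omega_g$ is $\nabla$-parallel precisely when $\tau_g=d\log|h|$. Therefore, if $\nabla\omega=0$ holds locally for some volume form $\omega=h\,\omega_g$ then $\tau_g$ is locally exact, hence closed; conversely, if $d\tau_g=0$ then locally $\tau_g=df$ for some $f$, and $\omega:=e^{f}\omega_g$ is a local parallel volume form. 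Combined with the first step, this proves the corollary.

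I do not anticipate a real obstacle; the only piece requiring care is bookkeeping the symmetry of each term produced by Lemma \ref{lem:Ric}, especially $(\operatorname{div}^{\widehat\nabla}K)(Y,Z)$ and $g(K_Y,K_Z)$, for which one must invoke the symmetry of $K$ in its lower arguments and its $g$-self-adjointness coming from \eqref{eq:relationCK}. Once that is in place the argument reduces to the elementary fact that a closed one-form is locally exact.
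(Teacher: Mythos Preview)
Your argument is correct and follows essentially the same route as the paper: both skew-symmetrize the Ricci formula from Lemma~\ref{lem:Ric} to obtain $\Ric(Y,Z)-\Ric(Z,Y)=-d\tau_g(Y,Z)$, and then convert closedness of $\tau_g$ into the existence of a local $\nabla$-parallel volume form via the relation $\tau=d\log\phi-\tau_g$ for $\omega=\phi\,\omega_g$. Your justification of the symmetry of each auxiliary term and your treatment of the equiaffineness step are in fact slightly more explicit than the paper's.
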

 We next recall the projective equivalence of affine connections. 
\begin{definition}\label{def:pflat}
 Two torsion-free locally equiaffine  connections $\nabla$ and $\overline{\nabla}$ on $M$
  are called {\it projectively equivalent} if there exists a closed $1$-form $\rho$ such that 
 \[
  \overline{\nabla}_X Y = \nabla_X Y + \rho (X) Y + \rho(Y) X
 \]
 holds. In particular when $\overline{\nabla}$ is flat, then $\nabla$
  is called {\it projectively flat}. Moreover the {\it projectively 
  curvature tensor} $P$ is defined as 
\begin{equation}\label{eq:Wyel}
P (X, Y) Z = R(X, Y) Z - \frac1{n-1} \left\{ \Ric\, (Y, Z) X - \Ric\, (X, Z) Y\right\}.
\end{equation}
\end{definition}
 It is classically known  that 
 that the projectively flat connections has been characterized as follows:
\begin{theorem}[p.88 and p.96 in \cite{Ei}]\label{thm:Eisen}
 A torsion-free locally equiaffine  connection $\nabla$ on $M$ is projectively flat if 
 and only if the following condition holds$:$
 \begin{enumerate}
     \item If $\dim M = 2$, $\nabla \Ric$ is totally symmetric. In this case 
     the projective curvature tensor $P$ automatically vanishes.
      \item If $\dim M \geq 3$,  the projective curvature tensor $P$ identically vanishes.
     In this case $\nabla \Ric$ is automatically totally symmetric.

 \end{enumerate}
\end{theorem}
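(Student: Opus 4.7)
The plan is to derive the transformation laws of $R$ and $\Ric$ under a projective change of connection and then to characterize, via these laws, when a projectively equivalent flat connection exists.

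I would begin with the algebraic step. If $\overline{\nabla}_X Y = \nabla_X Y + \rho(X)Y + \rho(Y)X$ for a $1$-form $\rho$, a direct calculation using torsion-freeness yields
\[
\overline R(X,Y)Z = R(X,Y)Z - L(X,Z)Y + L(Y,Z)X + \bigl[(\nabla_Y\rho)(X)-(\nabla_X\rho)(Y)\bigr]Z,
\]
where $L(X,Y) := (\nabla_X \rho)(Y) - \rho(X)\rho(Y)$. Closedness of $\rho$ is equivalent to symmetry of $L$, and in that case the last bracket vanishes; tracing over $X$ then gives
\[
\overline{\Ric}(Y,Z) = \Ric(Y,Z) + (n-1)L(Y,Z).
\]
These two identities do essentially all the work.

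For the direct implication, if $\nabla$ is projectively flat via a closed $\rho$ making $\overline R = 0$, then $\overline{\Ric} = 0$ forces $L = -\tfrac{1}{n-1}\Ric$, and feeding this back into the formula for $\overline R$ gives $P \equiv 0$. In dimension $n \ge 3$ this is the stated conclusion; in $n=2$, $P$ is algebraically zero for any connection, so one instead differentiates the relation $L = -\tfrac{1}{n-1}\Ric$ and exploits that $L$ is the symmetric part of the Hessian of the closed form $\rho$ to deduce that $\nabla \Ric$ is totally symmetric. For the converse, define $L := -\tfrac{1}{n-1}\Ric$, which is symmetric by Corollary \ref{cor:Ric}, and solve the first-order system
\[
(\nabla_X \rho)(Y) = L(X,Y) + \rho(X)\rho(Y)
\]
for an unknown $1$-form $\rho$; once $\rho$ exists, the formula for $\overline R$ immediately gives $\overline R = 0$. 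Existence of $\rho$ is a Frobenius problem on the graph of $\rho$ in $T^*M$: one commutes covariant derivatives on both sides of the system, uses the Ricci identity on $\nabla^2 \rho$, and reads off that the compatibility condition becomes exactly $P \equiv 0$ in dimensions $n \ge 3$ (with $\nabla \Ric$ totally symmetric following for free from the second Bianchi identity) and exactly $\nabla \Ric$ totally symmetric in dimension $2$.

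The technically hardest step is the converse: verifying that the Frobenius integrability condition for the PDE coincides precisely with the stated algebraic hypothesis in each dimension. This forces a careful orchestration of the second Bianchi identity, the trace that relates $P$ to the skew part of $\nabla \Ric$, and the dimensional accident at $n=2$ that makes $P$ automatically trivial but leaves the first-derivative condition on $\Ric$ as a nontrivial requirement.
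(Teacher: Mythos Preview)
Your proposal is correct and follows essentially the same classical route as the paper's sketch: derive the transformation of $R$ and $\Ric$ under a projective change, read off the necessary conditions, and for the converse solve $(\nabla_X\rho)(Y)=\rho(X)\rho(Y)+L(X,Y)$ by Frobenius. The only packaging difference is that the paper names the Cotton tensor explicitly and records the two identities $\overline{\Cot}-\Cot=\rho(P)$ and $\delta P=(n-2)\,\Cot$ to handle the $n=2$ case and the automatic symmetry of $\nabla\Ric$ for $n\ge 3$, which is exactly the content of your ``differentiate $L=-\tfrac{1}{n-1}\Ric$'' step and your appeal to the second Bianchi identity.
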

 We also collect the basic identities for curvatures by 
 a straight forward computation.
 \begin{lemma}\label{lem:identities}
The following identities hold$:$
\begin{enumerate}
    \item $R(X, Y) = - R(Y, X)$ and $R^*(X, Y) = - R^*(Y, X)$.
    \item $g(R(X, Y) Z,  W) =   - g(Z , R^* (X, Y) W)$.
    \item $R(X, Y) Z + R(Y, Z) X + R (Z, X) Y =0$\quad   
     {\rm $($$1$st Bianchi  identity$)$},
    \item $(\nabla_X R)(Y, Z) + (\nabla_YR)(Z, X) + (\nabla_ZR) (X, Y) =0 $\quad   {\rm 
     $($$2$nd Bianchi identity$)$}.

\end{enumerate}
 \end{lemma}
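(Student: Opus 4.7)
Each of the four identities is classical, but since the curvatures $R$ and $R^*$ are linked by the duality \eqref{eq:dualnablas}, only (2) requires real content; the rest follow from the definition \eqref{eq:Curvaturetensor} of the curvature together with the torsion-freeness of $\nabla$ (and of $\nabla^*$).

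\textbf{(1)} Antisymmetry is immediate from the definition. Swapping $X$ and $Y$ in $\nabla_X\nabla_Y Z - \nabla_Y\nabla_X Z - \nabla_{[X,Y]}Z$ flips the sign of the first two terms, while $[X,Y]$ becomes $-[Y,X]$, flipping the last one as well. The same verbatim argument applies to $R^*$ since $\nabla^*$ is also a connection.

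\textbf{(2)} The plan is to differentiate $g(Z,W)$ twice using \eqref{eq:dualnablas} and extract the commutator. First, apply \eqref{eq:dualnablas} to $Yg(Z,W)=g(\nabla_Y Z,W)+g(Z,\nabla^*_Y W)$, then apply $X$ and expand each term by \eqref{eq:dualnablas} once more:
\begin{equation*}
XY\,g(Z,W)=g(\nabla_X\nabla_Y Z,W)+g(\nabla_Y Z,\nabla^*_X W)+g(\nabla_X Z,\nabla^*_Y W)+g(Z,\nabla^*_X\nabla^*_Y W).
\end{equation*}
Subtracting the same expression with $X,Y$ swapped, the two mixed terms cancel and we obtain
\begin{equation*}
[X,Y]\,g(Z,W)=g\bigl(\nabla_X\nabla_Y Z-\nabla_Y\nabla_X Z,\,W\bigr)+g\bigl(Z,\,\nabla^*_X\nabla^*_Y W-\nabla^*_Y\nabla^*_X W\bigr).
\end{equation*}
Finally, applying \eqref{eq:dualnablas} directly to $[X,Y]g(Z,W)$ yields $g(\nabla_{[X,Y]}Z,W)+g(Z,\nabla^*_{[X,Y]}W)$, and subtracting this from the previous equation gives exactly
\begin{equation*}
0=g(R(X,Y)Z,W)+g(Z,R^*(X,Y)W).
\end{equation*}

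\textbf{(3) and (4)} These are the standard Bianchi identities for a torsion-free affine connection. For (3), expand $R(X,Y)Z+R(Y,Z)X+R(Z,X)Y$ using \eqref{eq:Curvaturetensor}, then repeatedly use the torsion-free identity $\nabla_U V-\nabla_V U=[U,V]$ to rewrite each second-order term; the remaining Lie bracket terms add up by the Jacobi identity to cancel the $\nabla_{[\cdot,\cdot]}$ pieces. For (4), differentiate $R$ along $\nabla$, cyclically permute $X,Y,Z$, and sum; after expanding $(\nabla_X R)(Y,Z)=\nabla_X(R(Y,Z))-R(\nabla_X Y,Z)-R(Y,\nabla_X Z)-R(Y,Z)\nabla_X$, the torsion-free condition again lets each pair of unwanted terms be absorbed into a Lie bracket, and the Jacobi identity kills the sum.

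\textbf{Main obstacle.} There is nothing deep here; the only place where one has to be careful is the bookkeeping in (2), since one must apply \eqref{eq:dualnablas} three times (twice to split the second derivatives and once to the bracket term) and see that the mixed terms $g(\nabla_Y Z,\nabla^*_X W)$ and $g(\nabla_X Z,\nabla^*_Y W)$ cancel symmetrically. Everything else is formally identical to the Riemannian proof.
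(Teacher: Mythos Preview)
Your proof is correct and follows essentially the same route as the paper: item (1) is read off from the definition, item (2) is obtained by expanding $XYg(Z,W)$, $YXg(Z,W)$, and $[X,Y]g(Z,W)$ via \eqref{eq:dualnablas} and cancelling the mixed terms, and item (3) is the standard torsion-free computation ending in the Jacobi identity. The only cosmetic difference is in (4): the paper works at a point in a frame with $\nabla_{e_i}e_j=0$ so that the $R(\nabla_X Y,Z)$-type terms in your expansion drop out immediately, whereas you keep general vector fields and absorb those terms pairwise into Lie brackets before invoking Jacobi; both arguments are equivalent.
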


\section{Characterization of constant curvature statistical manifolds}
 In this section, we characterize constant curvature statistical manifolds 
  in terms of various conjugate symmetries and the projective flatness of 
  the dual connection.
 \subsection{Conjugate symmetries}
 We first define various conjugate symmetries for a statistical manifold.
\begin{definition}
 Let $(M, g, \nabla)$ be a statistical manifold, and let $R$ and $R^*$ denote 
 curvatures of the connection $\nabla$ and its dual 
 connection $\nabla^*$, respectively.
 Moreover, let $\Ric$ and $\Ric^*$ denote the Ricci curvatures of $\nabla$ and 
 $\nabla^*$, respectively.
 Then $(M, g, \nabla)$ will be respectively called a \textit{self-dual}, 
 a \textit{conjugate symmetric} or a \textit{conjugate Ricci-symmetric} 
 if 
 \begin{equation}\label{eq:conjugatesymmetric}
     \nabla = \nabla^*, \quad      R = R^*\quad\mbox{or}\quad      \Ric = \Ric^*
 \end{equation}
 holds.
\end{definition}
 Note that notion of ``self-dual'' and ``conjugate symmetric''
 had been introduced in \cite{A} and \cite{Lau}, respectively.
 By \eqref{eq:dualnablas}, a self-dual manifold is nothing but $\nabla$ is the Levi-Civita connection of $g$. Moreover, by \eqref{eq:C} and \eqref{eq:K2},
 the conditions $\nabla = \nabla^*$, $C= 0$ and $K=0$ are mutally equivalent.
 From now on we always assume that a manifold $M$ is orientable.
 Then we have the following.
\begin{lemma}
 The condition $\nabla = \nabla^*$ $($resp. $R = R^*)$ of 
 a statistical manifold $(M, g, \nabla)$ implies $R = R^*$  
 $($resp. $\Ric = \Ric^*)$. 
 Moreover, $\Ric= \Ric^*$ implies $\Ric (X, Y) = \Ric (Y, X)$.
\end{lemma}
\begin{proof}
 The first statements follow immediately from the definition. We now assume
 $\Ric = \Ric^*$.
 From the formulas in Lemma \ref{lem:Ric}, we have
 \begin{equation}\label{eq:Ricciconj}
0 = \Ric (Y, Z) - \Ric^* (Y, Z) =  2 (\operatorname{div}^{\widehat \nabla} K) (Y, Z) - 2  (\widehat \nabla_Y \tau_g) (Z).
 \end{equation}
 In particular, $(\widehat \nabla_Y \tau_g) (Z)$ is symmetric with respect to $Y$ and $Z$, and thus 
 $ d \tau_g (Y, Z) = (\widehat \nabla_Y \tau_g) (Z) -(\widehat \nabla_Z \tau_g) (Y)=0$ follows.
From the formulas in Lemma \ref{lem:Ric} and symmetries of $\widehat \Ric (Y, Z)$ and 
$ (\operatorname{div}^{\widehat \nabla} K) (Y, Z)  + \tau_g (K(Y, Z)) - g(K_Y, K_Z)$ 
 with respect to $Y$ and $Z$, it is easy to see that 
\begin{equation}\label{eq:Ricsymm}
 \Ric (Y, Z) - \Ric (Z, Y) =  - (\widehat \nabla_Y \tau_g) (Z) + (\widehat \nabla_Z 
 \tau g )(Y)  = - d\tau_g (Y, Z).
\end{equation}
 This completes the proof.
\end{proof}
 From \eqref{eq:Ricciconj}, we have the following corollary.
\begin{corollary}
 The followings are mutually equivalent$:$
\begin{enumerate}
\item $\Ric = \Ric^*$.
\item $(\operatorname{div}^{\widehat \nabla} K) (Y, Z) =
  (\widehat \nabla_Y \tau_g) (Z)$.
\end{enumerate}
\end{corollary}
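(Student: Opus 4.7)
The plan is to read the equivalence directly off the identity \eqref{eq:RicRic*} of Lemma \ref{lem:Ric}, which reads
$$\frac{1}{2}\Ric(Y, Z) - \frac{1}{2}\Ric^*(Y, Z) = (\operatorname{div}^{\widehat \nabla} K)(Y, Z) - (\widehat \nabla_Y \tau_g)(Z).$$
This single identity already encodes both implications, so the argument will be essentially a two-line rearrangement rather than requiring any further computation with $K$ or the curvature tensors. It is the same identity that was just exploited in \eqref{eq:Ricciconj} to derive the symmetry of $\Ric$ from its conjugate symmetry.

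For the direction (1) $\Rightarrow$ (2), I would substitute $\Ric = \Ric^*$ into the displayed identity, so that the left-hand side vanishes identically for all $Y, Z$ and what remains is precisely $(\operatorname{div}^{\widehat \nabla} K)(Y, Z) = (\widehat \nabla_Y \tau_g)(Z)$. Conversely, for (2) $\Rightarrow$ (1), the same identity with its right-hand side set to zero immediately forces $\Ric(Y, Z) = \Ric^*(Y, Z)$ for all vector fields $Y, Z$.

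Since both directions are immediate algebraic consequences of Lemma \ref{lem:Ric}, there is no genuine obstacle in the proof; the only work has already been done in establishing that lemma. One incidental remark worth recording is that condition (2) automatically forces $(\widehat \nabla_Y \tau_g)(Z)$ to be symmetric in $Y$ and $Z$, because the divergence term on the other side is symmetric by the symmetry of the difference tensor $K$. This recovers the closedness $d\tau_g = 0$ noted in the preceding lemma, and shows that the corollary is a clean algebraic repackaging of the content encoded in \eqref{eq:RicRic*}.
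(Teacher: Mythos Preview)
Your proposal is correct and follows exactly the paper's approach: the paper states the corollary as an immediate consequence of \eqref{eq:Ricciconj} (equivalently \eqref{eq:RicRic*}), without giving a separate proof. Your additional remark about the symmetry of $(\widehat\nabla_Y\tau_g)(Z)$ is also consistent with what the paper already noted in the preceding lemma.
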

 We next characterize the conjugate symmetry of the curvature $R$.
\begin{proposition}\label{prp:conjugate}
 The followings are mutually equivalent$:$
\begin{enumerate}
\item $R = R^*$.
\item $\nabla C$ is totally symmetric.
\item $\widehat \nabla C$ is totally symmetric.
\item $\widehat \nabla K$ is totally symmetric.
\end{enumerate}
\end{proposition}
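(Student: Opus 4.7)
The plan is to chain the five conditions together using tools already established in Section 2. Specifically, the equivalence (2) $\Leftrightarrow$ (3) $\Leftrightarrow$ (4) is exactly Lemma \ref{lem:Csymmet}, so nothing new needs to be proved for those three. It then suffices to establish (1) $\Leftrightarrow$ (5) and (5) $\Leftrightarrow$ (3), which slots into the chain.

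For (1) $\Leftrightarrow$ (5), I would invoke the identity \eqref{eq:RRstar}, namely $R(X,Y,Z,W) + R^*(X,Y,W,Z) = 0$. Swapping the roles of $Z$ and $W$ rewrites this as $R^*(X,Y,Z,W) = -R(X,Y,W,Z)$. Since the metric $g$ is nondegenerate, $R = R^*$ as $(1,3)$-tensors is equivalent to the corresponding equality as $(0,4)$-tensors, and substituting the previous identity this becomes $R(X,Y,Z,W) = -R(X,Y,W,Z)$, which is precisely condition (5). Both implications follow from the same manipulation, so (1) and (5) are equivalent.

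For (5) $\Leftrightarrow$ (3), I would use identity \eqref{eq:3}:
\[
R(X,Y,Z,W) + R(X,Y,W,Z) = (\widehat\nabla_Y C)(X,Z,W) - (\widehat\nabla_X C)(Y,Z,W).
\]
Condition (5) says the left-hand side vanishes identically, which is equivalent to $(\widehat\nabla_X C)(Y,Z,W)$ being symmetric in the pair $(X,Y)$. Because $C$ itself is totally symmetric, $(\widehat\nabla_X C)(Y,Z,W)$ is automatically symmetric in $(Y,Z,W)$; combining these symmetries yields total symmetry of $\widehat\nabla C$ in all four slots, which is condition (3). The converse implication is immediate from the same identity.

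There is no serious obstacle here: once the identities of Section 2 are in hand, each equivalence is a short symmetry argument. The only thing to track carefully is the placement of the $Z$ and $W$ arguments when converting between $R$, $R^*$, and their $(0,4)$-forms via \eqref{eq:RRstar}, since it is easy to drop a sign when swapping indices.
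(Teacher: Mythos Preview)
Your proof is correct and essentially follows the paper's approach: both use Lemma~\ref{lem:Csymmet} for (2)$\Leftrightarrow$(3)$\Leftrightarrow$(4) and identity~\eqref{eq:3} for (3)$\Leftrightarrow$(5). The only minor difference is that the paper connects (1) directly to (4) via \eqref{eq:RicRic*0}, whereas you connect (1) to (5) via \eqref{eq:RRstar}; both are one-line observations from the curvature identities of Section~1, so the arguments are equivalent in spirit.
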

\begin{proof}
The equivalences of (2), (3) and (4) follow from Lemma \ref{lem:Csymmet}.
The equivalence of (1) and (4) follows from the formula in \eqref{eq:RicRic*0}. 
\end{proof}

\subsection{Characterization of constant curvature statistical manifolds}
 The notion of constant curvature statistical manifold was introduced in  \cite{Kur},
 see \eqref{eq:constantcurv}.
 Then the following lemma (Schur's lemma) can be used for the characterization of 
 a constant curvature statistical manifold.
\begin{lemma}\label{lem:SY}
 Let $S$ be the $(1, 1)$-tensor $S$ defined by 
 $\Ric (X, Y) = (n-1) g (S X, Y)$. Assume that $\Ric$ and 
 $\nabla \Ric$ are totally symmetric. Then if there exists a 
 smooth function $\lambda$ such $S Y = \lambda Y$ for 
 any vector field $Y$, then $\lambda$ is constant.
\end{lemma}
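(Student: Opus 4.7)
The plan is to convert the pointwise eigenvalue equation $SY = \lambda Y$ into a first-order PDE for $\lambda$ and then use a Schur-type trace argument to deduce $d\lambda = 0$. The essence is that the total symmetry of $\nabla\Ric$, combined with the total symmetry of the cubic form $C$, forces the derivative of $\lambda$ to vanish in every direction.

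First I would rewrite the hypothesis in tensorial form: $SY = \lambda Y$ is equivalent to $\Ric(X,Y) = (n-1)\lambda\,g(X,Y)$. Differentiating and using the statistical identity $(\nabla_X g)(Y,Z) = C(X,Y,Z)$ from \eqref{eq:C} yields
\[
(\nabla_X \Ric)(Y,Z) = (n-1)\bigl[X(\lambda)\,g(Y,Z) + \lambda\,C(X,Y,Z)\bigr].
\]
Next I would invoke the assumed total symmetry of $\nabla\Ric$, in particular the exchange $(\nabla_X\Ric)(Y,Z) = (\nabla_Y\Ric)(X,Z)$. Since $C$ is totally symmetric, $C(X,Y,Z) = C(Y,X,Z)$, and the $\lambda C$ contributions cancel, leaving the key identity
\[
X(\lambda)\,g(Y,Z) = Y(\lambda)\,g(X,Z)
\]
for all vector fields $X,Y,Z$.

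Finally I would contract this identity with the inverse metric in the $(Y,Z)$ slots. In a local frame, writing $\lambda_a := \partial_a \lambda$, the identity reads $\lambda_a g_{bc} = \lambda_b g_{ac}$; multiplying by $g^{bc}$ and using $g^{bc}g_{bc} = n$ together with $g^{bc}g_{ac} = \delta^b_a$ produces $n\,\lambda_a = \lambda_a$, hence $(n-1)\,d\lambda = 0$. Provided $\dim M \geq 2$, this forces $d\lambda = 0$, so $\lambda$ is locally, and therefore globally on each connected component of $M$, constant.

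I expect no serious obstacle here: the argument follows the classical Schur template, and the only delicate point is recognizing that the total symmetry of $C$ is precisely what lets one discard the non-metric contribution $\lambda C(X,Y,Z)$ before tracing. (In dimension one the conclusion is vacuous since $\Ric \equiv 0$ automatically.)
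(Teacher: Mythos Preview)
Your proof is correct and follows essentially the same Schur-type argument as the paper: both routes antisymmetrize $(\nabla_X\Ric)(Y,Z)$ in $X,Y$ to obtain $X(\lambda)\,g(Y,Z)=Y(\lambda)\,g(X,Z)$ and then conclude $d\lambda=0$ in dimension $\geq 2$. The only cosmetic difference is that the paper carries out the derivative computation via the dual connection $\nabla^*$, whereas you write $\nabla_X\Ric=(n-1)\bigl[X(\lambda)g+\lambda\,C\bigr]$ directly and cancel the $\lambda C$ terms using the total symmetry of $C$; your formulation is arguably a bit more transparent.
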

\begin{proof}
 Using the definition of $\nabla_X \Ric$ and
 the symmetry of $\Ric$ we compute 
 \begin{align*}
(\nabla_X \Ric) (Y, Z) & = X \Ric ( Y, Z) - \Ric ( \nabla_X Y, Z) -  \Ric ( Y, \nabla_X Z), \\
 & = (n-1)\left\{X g( SY, Z) - g( S Y, \nabla_X Z)- g (S Z, \nabla_X Y)\right\}, \\
 & = (n-1)\left\{g(\nabla^*_X (\lambda Y), Z) - g (\lambda Z, \nabla_X Y)\right\}.
 \end{align*}
 By using the total symmetry of $\nabla \Ric$
 we compute
 \begin{align*}
     0 & = \frac1{n-1}(\nabla_X \Ric)(Y, Z ) - \frac1{n-1}(\nabla_Y \Ric)(X, Z), \\
       & = g (\nabla^*_X (\lambda Y), Z) - g (\lambda Z , \nabla_X Y) 
        -  g (\nabla^*_Y (\lambda X), Z) + g (\lambda Z , \nabla_Y X),   \\ 
        &= g((X \lambda) Y - (Y \lambda ) X, Z).
 \end{align*}
 Thus we obtain $(X \lambda) Y - (Y \lambda ) X=0$ and 
 $X \lambda = 0$ follows, that is, $\lambda$ is constant.
\end{proof}
 We now characterize a constant curvature statistical manifold.
\begin{theorem}\label{thm:Charactconst}
 For a statistical manifold $(M, g, \nabla)$, the followings are mutually equivalent$:$
\begin{enumerate}
    \item It has a constant curvature.
    \item It is a projectively flat conjugate symmetric manifold.
    \item It is a projectively flat conjugate Ricci symmetric manifold.
\end{enumerate}
\end{theorem}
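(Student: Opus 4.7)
The plan is to prove the two implications separately, leveraging the curvature identities of Section 2 together with Lemma \ref{lem:SY} and Theorem \ref{thm:Eisen}. For $(1)\Rightarrow(2)$, I would first rewrite the constant curvature condition as a $(0,4)$-tensor
\[
R(X,Y,Z,W) = k\{g(Y,Z)g(X,W) - g(X,Z)g(Y,W)\},
\]
which is manifestly skew in $(Z,W)$. Combined with the duality $g(R(X,Y)Z,W) = -g(Z, R^*(X,Y)W)$ from Lemma \ref{lem:identities}, this forces $R^* = R$. Tracing gives $\Ric = k(n-1)g = \Ric^*$, which is symmetric, so $\nabla^*$ is locally equiaffine by (the $\nabla^*$-version of) Corollary \ref{cor:Ric}. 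For $\dim M \geq 3$ I would verify $P^*\equiv 0$ by direct substitution into \eqref{eq:Wyel}; for $\dim M = 2$, $(\nabla^*_X\Ric^*)(Y,Z) = -k(n-1)C(X,Y,Z)$ using $\nabla^* g = -C$, which is totally symmetric because $C$ is. In either case Theorem \ref{thm:Eisen} delivers projective flatness of $\nabla^*$.

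For $(2)\Rightarrow(1)$, the conjugate symmetry $R = R^*$ immediately gives $\Ric = \Ric^*$, and projective flatness of $\nabla^*$ forces $\nabla^*$ to be locally equiaffine, so $\Ric$ is symmetric. The target is to put $R$ in the form $R(X,Y)Z = \lambda\{g(Y,Z)X - g(X,Z)Y\}$ for some smooth function $\lambda$, so that Lemma \ref{lem:SY} (applied dually to $\nabla^*$) can promote $\lambda$ to a constant. For $\dim M \geq 3$, Theorem \ref{thm:Eisen} yields $P^*\equiv 0$, hence
\[
R(X,Y)Z = R^*(X,Y)Z = g(SY,Z)X - g(SX,Z)Y,
\]
where the self-adjoint $(1,1)$-tensor $S$ is defined by $\Ric(X,Y) = (n-1)g(SX,Y)$. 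I would then apply the skew-symmetry of $R$ in the last pair (item (5) of the proposition characterizing the conjugate symmetry of $R$) to obtain
\[
g(SY,Z)g(X,W) + g(SY,W)g(X,Z) = g(SX,Z)g(Y,W) + g(SX,W)g(Y,Z),
\]
and a $g$-trace over an appropriate index pair forces $SX = \lambda X$ for a smooth function $\lambda$. In $\dim M = 2$ the same form of $R$ is immediate, because conjugate symmetry makes $R$ skew in both pairs and the space of such tensors is one-dimensional at each point. With $R$ in this shape, the hypotheses of Lemma \ref{lem:SY} applied to $\nabla^*$ are all met --- $\Ric^* = \Ric$ is symmetric, $\nabla^*\Ric^*$ is totally symmetric by Theorem \ref{thm:Eisen}, and $S^* X = \lambda X$ --- so $\lambda$ is a real constant $k$, and $R$ has the constant curvature form.

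The main technical obstacle is the trace argument extracting $SX = \lambda X$ from the skew--skew symmetry of $R$ in dimensions $\geq 3$; once this is in hand, the rest is a careful combination of the identities already established in Section 2, together with the observation that Lemma \ref{lem:SY} is stated for $\nabla$ but applies verbatim to $\nabla^*$ by the inherent self-duality of the statistical structure.
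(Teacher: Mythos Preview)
Your proposal is correct and follows essentially the same route as the paper. The only tactical differences are in the step extracting $S=\lambda\,\mathrm{Id}$ in $(2)\Rightarrow(1)$---the paper derives the second expression $R(X,Y)Z=g(Y,Z)SX-g(X,Z)SY$ from $R=R^*$ and then specializes to $Z=X$ with $g(X,Y)=0$, whereas you take a $g$-trace of the skew-symmetry identity---and in the final appeal to Lemma~\ref{lem:SY}, which you apply directly to $\nabla^*$ while the paper first transfers projective flatness back to $\nabla$.
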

\begin{proof}
(1) $\Rightarrow$ (2): 
 From the formula $(2)$ in Lemma \ref{lem:identities} and the constancy 
of the curvature, that is, 
\[
R(X, Y)Z = k \left\{ g(Y, Z) X- g(X, Z)Y\right\}
\]
for some constant $k$,
we have 
\begin{align*}
g(R^*(X, Y) Z, W) &= -g(Z, R(X, Y)W),  \\  
&=- g(k \left\{ g(Y, W) X - g (X, W) Y\right\}, Z), \\ 
&=  g(k \left\{ g(Y, Z) X - g (X, Z) Y\right\}, W), \\
&= g(R(X, Y) Z, W).
\end{align*}
Since $W$ is arbitrary, thus the conjugate symmetry of 
 the curvature $R$ follows.

The Ricci curvature for a constant curvature statistical manifold 
is easily computed as 
\begin{equation}\label{eq:Ricexp}
 \Ric (Y, Z) = (n-1) k g(Y, Z),
\end{equation}
 and therefore the projective curvature $P$ in \eqref{eq:Wyel}
 vanishes.  Thus for $n \geq 3$, $\nabla$ is projectively flat, 
 see Theorem \ref{thm:Eisen}.
 Since $R = R^*$, $\nabla^*$ is also projectively flat.

 For $n=2$, the projective flatness of $\nabla$ is equivalent to 
 the total symmetry of $\nabla \Ric$, however from \eqref{eq:Ricexp}
 it is equivalent to the total symmetry of $\nabla g$, which 
 is of course true by the definition of the statistical manifold $(M, g, \nabla)$.
 Then the projective flatness of $\nabla^*$ follows from the duality.

 (2) $\Rightarrow$ (3) is clear. 
 
(3) $\Rightarrow$ (1): 
 We define a
 $(1, 1)$-tensor $S$ by $\Ric (X, Y) =(n-1) g (S X, Y)$, then 
 from the symmetry of $\Ric$,  $g (S X, Y) = g(X, SY)$ follows.
 Moreover, the projective flatness of $\nabla$ implies that 
 the projective curvature tensor $P$ vanishes, that is,
 \begin{equation}\label{eq:R1}
  R (X, Y) Z = g(SY, Z)X - g(S X, Z) Y.
\end{equation}
 By using the formula (2) in Lemma \ref{lem:identities} and 
 the equation above, we compute
\begin{align*}
g (R^* (X, Y) Z, W ) & = -g (Z, R (X, Y) W), \\
& =-g (Z,g(SY, W)X - g(S X, W) Y), \\
& =g (g(Z, Y) S X - g(Z, X) SY, W).
\end{align*}
 Since $W$ is an arbitrary, thus we have
 \begin{equation}\label{eq:R2}
 R^* (X, Y) Z = g(Y, Z) S X - g(X, Z) SY.
 \end{equation}
 Taking the trace with respect to $X$ for the above equation, we have 
\begin{equation}\label{eq:R3}
  \Ric^* (Y, Z) = g(Y, Z) \tr S - g(S Y, Z).
\end{equation}
 Then the conjugate symmetry of $\Ric$ implies that $n g(Y, S Z) = g(Y, \tr S \cdot Z)$,
 thus $S = (\tr S) {\rm id}$ follows, and it is equivalent to 
  \begin{equation}
      S Y = \lambda Y,
  \end{equation}
 where $\lambda = \tr S/n$.
 The projective flatness of $\nabla$ also implies that 
 $\nabla \Ric$ is totally symmetric. Thus Lemma \ref{lem:SY}
 implies that $\lambda$ is constant and thus $R$ is of constant curvature.
\end{proof}
\begin{remark}
 It should be remarked that (1) $\Rightarrow$ (2) part of 
 Theorem \ref{thm:Charactconst} has been 
 proved in \cite[Theorem 2]{Rylov2}, \cite[Theorem 3.3]{MS} or 
 \cite[Theorem 9.7.2]{CU}, however,  (3) (or (2))  $\Rightarrow$ (1) part 
 has not been proved in those references.
 Moreover, the equivalence of (2) and (3) have been noted by anonymous referee
 of the paper \cite{KO}.
\end{remark}

\subsection{Conjugate symmetric $\alpha$-connections}
For a statistical manifold $(M, g, \nabla)$, it is natural to consider a family of 
statistical structure $(g, \nabla^{\alpha})$ as follows:
\begin{equation}\label{eq:alphaconn}
\nabla^{\alpha} = \widehat \nabla + \alpha K, 
\end{equation}
where $\alpha$ is a real constant. Note that $\nabla^{1}= \nabla$, and 
 from \eqref{eq:dual} we have
 $\nabla^{-1} = \nabla^{*}$. Moreover, 
 \[
  \nabla^{-\alpha} = (\nabla^{*})^{\alpha}
 \]
 holds.
The family of connections $\{\nabla^{\alpha}\}_{\alpha \in \mathbb R}$ has been 
 called the \textit{$\alpha$-connections}, see \cite{AN}.
 It is easy to see that $(g, \nabla^{\alpha})$ is a statistical structure 
 for any $\alpha \in \mathbb R$, and if $\nabla$ (resp. $R$ or $\Ric$) is 
 conjugate symmetric, then  $\nabla^{\alpha}$ (resp. $R^{\alpha}$ or $\Ric^{\alpha}$) 
 is conjugate symmetric for any $\alpha \in \mathbb R$.

The following proposition is a slight generalization of Proposition 4.1 in \cite{Op2}.
\begin{proposition}\label{prp:alphafamily}
 Let $(M, g, \nabla)$ be a constant curvature statistical manifold, and 
 let  $\nabla^{\alpha}$ the $\alpha$-connection in \eqref{eq:alphaconn} 
 for $\alpha \in \mathbb R$.
 Assume that  the pseudo-Riemannian metric $g$ is not constant curvature. Then 
 the statistical manifold $(M, g, \nabla^{\alpha})$ does not have constant curvature 
 except $\alpha = \pm 1$, that is the cases for 
 $\nabla^1= \nabla$ and $\nabla^{-1}=\nabla^*$.
\end{proposition}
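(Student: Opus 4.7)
The strategy is to expand $R^{\alpha}$ using the identities of Lemma~\ref{lem:Curvatures} and to exploit the conjugate symmetry that comes for free from the constant curvature hypothesis, so as to isolate how $R^{\alpha}$ depends on $\alpha$.

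First, since the difference tensor of $\nabla^{\alpha} = \widehat{\nabla} + \alpha K$ with respect to the Levi-Civita connection is precisely $\alpha K$, applying \eqref{eq:R} to the statistical structure $(g, \nabla^{\alpha})$ yields
\begin{equation*}
R^{\alpha}(X,Y) = \widehat{R}(X,Y) + \alpha\bigl((\widehat{\nabla}_X K)_Y - (\widehat{\nabla}_Y K)_X\bigr) + \alpha^{2}[K_X, K_Y].
\end{equation*}
By Theorem~\ref{thm:Charactconst}, the constant curvature assumption on $(M, g, \nabla)$ forces $R = R^*$; combined with \eqref{eq:RicRic*0} this gives $(\widehat{\nabla}_X K)_Y = (\widehat{\nabla}_Y K)_X$, so the linear-in-$\alpha$ term vanishes and
\begin{equation*}
R^{\alpha}(X,Y)Z = \widehat{R}(X,Y)Z + \alpha^{2}[K_X, K_Y]Z.
\end{equation*}

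The same conjugate symmetry identifies $R = \widehat{R} + [K, K]$, so using $R(X,Y)Z = k\{g(Y,Z)X - g(X,Z)Y\}$ I would eliminate $\widehat{R}$ to rewrite
\begin{equation*}
R^{\alpha}(X,Y)Z = k\bigl\{g(Y,Z)X - g(X,Z)Y\bigr\} + (\alpha^{2} - 1)[K_X, K_Y]Z.
\end{equation*}
If $(M, g, \nabla^{\alpha})$ were constant curvature $k^{\alpha}$, subtracting would give
\begin{equation*}
(\alpha^{2} - 1)[K_X, K_Y]Z = (k^{\alpha} - k)\bigl\{g(Y,Z)X - g(X,Z)Y\bigr\}.
\end{equation*}

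For $\alpha \neq \pm 1$, dividing by $\alpha^{2} - 1$ yields $[K_X, K_Y]Z = c\{g(Y,Z)X - g(X,Z)Y\}$ for the real constant $c = (k^{\alpha} - k)/(\alpha^{2} - 1)$, whence
\begin{equation*}
\widehat{R}(X,Y)Z = (k - c)\bigl\{g(Y,Z)X - g(X,Z)Y\bigr\}.
\end{equation*}
This says $g$ itself has constant sectional curvature $k - c$, contradicting the standing hypothesis. The only real subtlety is the first step: recognizing that the constant curvature of $\nabla$ supplies conjugate symmetry \emph{as a free consequence} via Theorem~\ref{thm:Charactconst}, which collapses the $\alpha$-linear term and forces the $\alpha^{2}$-term alone to absorb any deformation. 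After that the argument is purely algebraic and requires no further geometric input.
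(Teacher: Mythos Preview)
Your argument is correct and follows essentially the same strategy as the paper: expand $R^{\alpha}$ via Lemma~\ref{lem:Curvatures}, use the conjugate symmetry furnished by Theorem~\ref{thm:Charactconst} to kill the $\alpha$-linear term, and then force $[K_X,K_Y]$ into a form that makes $\widehat R$ constant curvature. The only difference is in the last step: the paper re-invokes Theorem~\ref{thm:Charactconst} to conclude that $\nabla^{\alpha}$ is projectively flat, obtains $[K_X,K_Y]Z=\gamma(Y,Z)X-\gamma(X,Z)Y$ for some $(0,2)$-tensor $\gamma$, and then argues that $\widehat\nabla$ is projectively flat (hence $g$ has constant curvature); you instead subtract the two explicit constant-curvature expressions directly to get $[K_X,K_Y]Z=c\{g(Y,Z)X-g(X,Z)Y\}$ with a genuine constant $c$, which immediately gives $\widehat R$ in constant-curvature form without passing through projective flatness. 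Your route is marginally more elementary here, but the two arguments are otherwise identical.
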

\begin{proof}
 From Theorem \ref{thm:Charactconst}, the constant curvature property of $(M, g,\nabla)$
 is equivalent to the conjugate symmetry of $R$ and the projective flatness of $\nabla^*$. 
 By the conjugate symmetry of $R$, we have
 \begin{equation}\label{eq:RhatR}
  R(X, Y) = \widehat R (X, Y) +[K_X, K_Y].
  \end{equation}
  Moreover, the projective flatness of $\nabla^*$ can be written as 
\[
 R(X, Y) Z = R^* (X, Y) Z =  \frac1{n-1}
\left\{
\Ric (Y, Z) X - \Ric (X, Z) Y
 \right\}.
\]
Then since 
$\nabla^{\alpha} = \widehat \nabla + \alpha K$, 
we have  
 \begin{equation}\label{eq:alphacurvature}
  R^{\alpha}(X, Y) = \widehat R (X, Y) +\alpha^2[K_X, K_Y] 
   = R (X, Y) +(\alpha^2-1)[K_X, K_Y]. 
  \end{equation}
 It follows that for $\alpha \neq \pm 1$
\begin{equation}\label{eq:idcurv}
 \widehat R(X, Y) = \frac1{1- \alpha^2} 
 (R^{\alpha}(X, Y)- \alpha^2 R(X, Y))
\end{equation}
 holds. Then the claim follows.
 \end{proof}
\begin{remark}
  The assumption of non-constant curvature of the pseudo-Riemannian metric 
  $g$ in Proposition \ref{prp:alphafamily} 
  is a mild restriction. In fact, many Fisher metrics determined from 
  statistics are not constant curvature, for examples, the multivariate 
  normal distributions and the gamma distributions etc, see \cite{Lau}. 
\end{remark}
We finally give an example of flat statistical manifolds.
\begin{example}[Multivariate normal distributions]
 The density function $p$ of the multivariate normal distribution
 is given by 
\begin{align*}
    p(x,\mu,\Sigma)=\frac{1}{\sqrt{(2\pi)^n}\det (\Sigma)}\exp \left\{-\frac{1}{2}(x-\mu)^T\Sigma^{-1}(x-\mu) \right\},
\end{align*}
 where $\mu \in \mathbb R^n$ is the mean and 
 $\Sigma$ is the covariance, which takes values in the set of degree $n$
 positive definite matrices.
 Therefore the set of density functions is parameterized by 
\begin{align*}
    \mathcal{N}=\operatorname{Pos}(n,\mathbb{R})\times \mathbb{R}^n.
\end{align*}
 Since the affine group $\operatorname{Aff}(n, \mathbb R)= \operatorname{GL}_n \mathbb R 
 \ltimes \mathbb R^n$ acts transively on $\mathcal{N}$, it can be represented by 
 a homogeneous space $\operatorname{Aff}(n, \mathbb R)/\operatorname{Stab}$,
 where $\operatorname{Stab}$ denotes the stabilizer at a base point, see \cite{GQ},
 thus $\mathcal N$ has a manifold structure.
 We then define the \textit{Fisher metric} and the cubic tensor
 of $\mathcal N$ as
\begin{align*}
 g^F(X,Y)&=E[(X\log p) (Y \log p) ], \\
 C(X, Y, Z)&=E[(X\log p) (Y \log p) (Z \log p)], 
\end{align*}
where $X$, $Y$, $Z$ are tangent vectors of $\mathcal N$,
and \[
E[f]=\int_{\mathbb R^n}
 f(x)p(x, \mu, \Sigma)\, d x 
    \]
for an integrable function $f$ on $\mathbb{R}^n$, see 
\cite{AN}. 
For any real constant $\alpha$, we define an affine connection 
$\nabla^{\alpha}$ by 
\begin{equation*}
 g^F(\nabla^{\alpha}_X Y,Z)
  =g^F(\nabla^{g^F}_X Y, Z)-\dfrac{\alpha}{2} C(X,Y,Z), 
\end{equation*}
 where $\nabla^{g^F}$ denotes the Levi-Civita connection 
of the Riemannian metric $g^F$. 
 The affine connection $\nabla^{\alpha}$ has been called the
 the \emph{Amari-Chentsov $\alpha$-connection} 
 for the space of normal distributions.

 Then a straightforward computation shows that all Christoffel symbols $\Gamma_{ijk}$
 are zero for $\alpha =1$ and by the duality, the flatness of $\nabla^{\pm 1}$ follows. 

 For $n=1$, the Fisher metric $g^F$ can be computed as 
\[
 I = \begin{pmatrix} \frac{1}{s} & 0\\0 & \frac1{2 s^2 } \end{pmatrix},
\]
 where $\Sigma = s>0$.
 It is easy to see that $g^F$ gives 
 a constant curvature hyperbolic metric, thus the Riemannian curvature 
 $\widehat R$ is constant $-1/2$. Applying the formula in \eqref{eq:idcurv}, we have 
 the constant curvature $-(1 - \alpha^2)/2$ for $R^{\alpha}$.

 On the other hand for $n\geq 2$, the metric connection $\nabla^{g^F}$ does not have
 constant curvature, thus by Proposition \ref{prp:alphafamily}, the
 curvature of $R^{\alpha}$ is not constant except $\alpha = \pm 1$.
 However since $R^{\pm 1}=0$, $R^{\alpha}$ satisfies 
 $R^{\alpha} = R^{-\alpha}$ for any $\alpha \in 
 \mathbb R$. 
 
\end{example}
 \begin{remark}
\mbox{}
\begin{enumerate}
\item  The dual flat structures on a statistical manifold is 
 particularly important for information geometry, see \cite{AN}.

 \item 
 It is known that the Fisher metric $g^F$ and the Amari-Chentsov $\alpha$-connection 
 $\nabla^{\alpha}$ of $\mathcal N$
 is invariant under a Lie group action of $\operatorname{Aff}(n, \mathbb R)$
 and moreover, a subgroup of $\operatorname{Aff}(n, \mathbb R)$ acts simply and transively 
 on $\mathcal N$. Thus $(\mathcal N, g^F, \nabla^{(\alpha)})$ 
 becomes a \textit{statistical Lie group}, see Definition 1.4.1 in \cite{FIK}.
 
\item More generally for a given Lie group $G$ with a left-invariant metric $g$,
 one can construct a left-invariant 
 affine connection $\nabla$ on $G$ such that the curvature of $\nabla$ is constant, 
 that is, $(G, g, \nabla)$ is a statistical Lie group with constant curvature, 
 as follows:
 A left-invariant connection $\nabla$ on $G$ is given by values of Christoffel symbols
(which are constant) at 
 a base point and by Theorem \ref{thm:Charactconst}. Then constancy of 
 the curvature of $\nabla$ is characterized by $R = R^*$
 and the projective flatness of $\nabla$.
 Moreover $R=R^*$ is equivalent to the total 
 symmetry of $\widehat \nabla C$
 by Proposition \ref{prp:conjugate}. Thus constancy of the curvature can be 
 determined by algebraic equations among Christoffel symbols.  
 It is not difficult to see that there are many solutions
 in general.
\end{enumerate}
\end{remark}
 
\section{Projectively flat structures and constant curvature statistical structures}
\label{sc:PFS}
 In this section we will apply the characterization of constant curvature 
 statistical structures to properly convex $\RPn$-structures on 
 a compact Riemannian manifold.

 In \cite{Lab}, Labourie characterized properly convex $\RPn$-structures on a 
 compact manifold $M$ by 
a certain condition, which he called the 
 \textit{Condition (E)}.
 To explain it, let $g$ be a Riemannian metric on $M$, 
 and let $L = \mathbb R \times M$ the trivial bundle over $M$.
 The Condition (E) is defined as follows:
\begin{align*}
&\mbox{Condition (E)} \quad\Longleftrightarrow \quad 
\left\{
\begin{array}{l}
\mbox{(1) $\nabla$ preserves a volume.} \\
\mbox{(2) $\nabla^{g}$ is flat.}
\end{array}
\right.
\end{align*}
 Here $\nabla$ is a torsion-free affine connection on $M$, and 
 $\nabla^{g}$ is a connection on $TM \oplus L$ given by 
\begin{equation*}
  \nabla^g_X 
\begin{pmatrix} Z \\ \lambda \end{pmatrix} = 
\begin{pmatrix} \nabla_X Z + \lambda X \\ L_X \lambda + g(Z, X) \end{pmatrix}. 
\end{equation*}
 It is immediately that $\nabla^g$ is flat if and only if the following conditions hold:
\begin{equation}\label{eq:equicond}
 \left\{
\begin{array}{l}
 \nabla_X g(Y, Z) - \nabla_Y g(X, Z)=0,\\
 R(X, Y) Z -g(X, Z) Y + g(Y, Z) X=0.
\end{array}
\right.
\end{equation}
 Then the properly convex $\RPn$-structures on a compact manifold have been characterized 
 by Condition (E), see Theorem 3.2.1 in \cite{Lab},   
 which is the following Theorem \ref{thm:projflat}.

 From the discussion in Introduction, the first condition in \eqref{eq:equicond} 
 is just equivalent to that $(\nabla, g)$ defines a statistical structure, 
 see \eqref{eq:C},
 and the second condition is equivalent to the constancy of the curvature 
 $R$ with $k=-1$, see \eqref{eq:constantcurv}.
 Moreover, since constancy of the curvature implies symmetry of the Ricci curvature, 
 and by Corollary \ref{cor:Ric}, we have automatically the condition (1) in 
 Condition (E). Therefore, Theorem 3.2.1 in \cite{Lab} 
 can be reformulated as follows:
\begin{theorem}\label{thm:projflat}
 Let $(M, g, \nabla)$ be a compact constant curvature $-1$ statistical manifold 
 such that the metric $g$ is positive definite.
 Then $\nabla$ is projectively flat and defines a properly convex structure on $M$.
 Conversely, every properly convex projectively flat structure on $M$ is
 obtained in this way. 
\end{theorem}
 We will give a sketch of the proof in Appendix \ref{sc:pcflat}.
\begin{remark}\label{rm:pf}
 The projective flatness of the connection $\nabla$ 
 for a constant curvature statistical manifold $(M, g, \nabla)$
 in Theorem \ref{thm:projflat} immediately
 follows from Theorem \ref{thm:Charactconst}. Moreover, 
 the dual connection $\nabla^*$ of $\nabla$ also defines a properly 
 convex $\RPn$-structure. This induces a certain duality on the moduli space
 of properly convex $\RPn$-structures, see the discussion below 
 for the case of surfaces. 
\end{remark}
 Moreover, Labourie has characterized convex $\mathbb{RP}^2$-structures on 
 a compact surface $\Sigma$ in terms of pairs of a complex structure and a holomorphic 
 cubic differential, \cite[Theorem 1.0.2]{Lab}, which is a consequence of 
 Theorems 4.2.1 and 4.1.1. of loc. cit. 
 In the proof of Theorem 4.1.1, on page 1070, Condition (E) can be translated to four conditions
 \begin{enumerate}
 \item $A(X)$ is symmetric and trace free.
 \item $A(X) Y = A(Y) X$.
 \item $d^{\nabla} A = 0$.
 \item $R^g (X, Y)Z+[A(X), A(Y)]Z + g(Y, Z) X- g(X, Z) Y=0$, 
 \end{enumerate}
 where $g$ is a metric on $\Sigma$, $\nabla$ is the Levi-Civita connection 
 and $A(X)Y$ is the difference tensor.  In terms of our terminology, $A=K$ 
 and $\nabla = \widehat \nabla$. The first condition is that the trace-free 
 structure of $K$ in \eqref{eq:tracefree} and the third condition is 
 the total symmetry of $\widehat \nabla K$, which follows from 
 the constant curvature property of $\nabla$, see Proposition \ref{prp:conjugate}.
 The fourth condition the first formula Lemma \ref{lem:Curvatures} under 
 constancy of the curvature $R$.
 Therefore, they can be naturally interpreted 
 by notion on a statistical manifold.

 By using Theorem 4.2.1 of \cite{Lab}, it has been shown that 
 there is one-to-one correspondence between 
 a set of triads $(\nabla, \omega, J)$ ($\omega$ is a volume form and $J$ is a 
 complex structure on $\Sigma$) which   satisfies Condition (H), see 
  the equation (13) in page 1084, and 
\[
 \mathrm{Rep}_H(\pi_1(\Sigma), \mathrm{PSL}_3 \mathbb R),
\]
 where $\mathrm{Rep}_H$ denotes the holonomy representation.
 In Section 7.1, using Condition (H), a duality in the space of representations
 has been discussed. It has been shown that 
 a triad $(\nabla, \omega, J)$ satisfies Condition (H)
 if and only if $(-J\nabla J, \omega, J)$ satisfies it, thus 
 the above correspondence defines a duality on the space of representations.
 In the statistical structure point view, $-J\nabla J$ is nothing but the 
 dual connection $\nabla^*$ of $\nabla$. In fact for a non-degenerate metric 
 $g$ on $\Sigma$ such that $g$ induces the volume $\omega$, 
 $X g(Y, Z) = X g(Z, Y) = X g(J Z, JY)$ because $g$ 
 is compatible with the complex structure $J$.
 A straightforward computation shows by using \eqref{eq:dualnablas} that 
\begin{align*}
 g(\left\{\nabla_X  + J \nabla^*_X J \right\}Y, Z )+ g(Y, 
 J \left\{ \nabla_X  +  J \nabla^*_X J  \right\} J Z) =0
\end{align*}
holds. Thus $\nabla^* = - J \nabla J$ follows, and the dual 
 constant curvature statistical structure naturally 
 induces a duality on the moduli space of representations. 
\begin{remark}
 In   Theorem 4 of \cite{Lof},
 Loftin has characterized properly convex structures 
 in terms of affine sphere structures for a given $\RPn$-manifold. 
 It is evident that 
 an affine sphere structure gives a statistical structure with 
 constant curvature. On the one hand, Theorem \ref{thm:projflat} in this paper 
 has shown
 that a compact constant $-1$ statistical manifold $M$ is a $\RPn$-manifold. 
\end{remark}
\appendix 
\section{A sketch of the proof of Theorem \ref{thm:projflat}}\label{sc:pcflat}
 As explained in Section \ref{sc:PFS}, property convex $\RPn$-structures on a compact manifold $M$ have been characterized by Condition (E), and 
 we have rephrased it  in terms of constant curvature $-1$
 statistical manifold structure on $M$, and we have obtained 
 Theorem \ref{thm:projflat}.
 In this appendix, we will briefly give a proof of this theorem 
 along the proof of Theorem 3.2.1 in \cite{Lab}.
 
 Let us prove the necessary part. Let $\widetilde M$ be the universal cover of $M$
 and consider the bundle $T \widetilde M \oplus L$, where $L = \mathbb R \times \widetilde M$. Then it is evident that by the flatness of $\nabla^g$ (which is equivalent to 
 the constant curvature $-1$ statistical structure),
 $T \widetilde M  \oplus L$ is isomorphic to the trivial bundle $\mathbb R^{n+1} \times \widetilde M$.
 Let us  take the projection $p$ from $T \widetilde M \oplus L
 \cong  \mathbb R^{n+1} \times \widetilde M$ to $\mathbb R^{n+1}$.
 Moreover, let us take the canonical section of $T \widetilde 
 M \oplus L$ by $u_0 : m 
 \to (0, 1)$, and define
 $\phi = p \circ u_0$. 
 Then by the construction, $\phi$ is a $\rho$-equivariant mapping from $\widetilde M$
 to $\mathbb R^{n+1}$, where $\rho$ is 
 the holonomy representation  of the flat connection 
 $\nabla^g$.
  Then 
 Labourie has proved that $\phi$ is a proper immersion, and the image $\phi (M)$ is a locally 
 convex proper hypersurface
 in a sequence of Propositions, 
 3.2.2 (immersion), 3.2.3 (strictly locally convex and radial) and 3.2.4 (proper) 
 in \cite{Lab}, respectively.  Moreover, the geodesic $\gamma(t)$ with respect to 
 $\nabla$ gives
 a sub-bundle 
\[
 P = \mathbb R (\dot\gamma) \oplus \mathbb R \subset TM \oplus L,
\]
 which is parallel along $\gamma(t)$. Then $\phi(\gamma(t))$ is the projective line defined by 
 $P$, and $\nabla$ and $\nabla^g$ define 
 the same projective flat structure. 
 Therefore the connection $\nabla$ gives a properly convex 
 $\RPn$-structure on $M$.

 Let us prove the sufficient part.
 An important step has been proved by Vinberg \cite{Vin} and see 
 Lemma 3.1.1 in \cite{Lab}:
 For a given properly convex $\RPn$-structure on a manifold $M$ induced by 
 the pair $(f, \rho)$, there exists a proper $\rho$-equivariant immersion 
 $\tilde f$ from a universal cover $\widetilde M$ into $\mathbb R^{n+1}$
 such that the image is strictly 
 convex and radial and $\pi \circ \tilde f = f$, where $\pi$ is the projection 
 $\mathbb R^{n+1}\setminus\{0\}$ to $\RPn$.
 Note that a hypersurface is \textit{strictly convex} means that 
 it does not contain any segment and
 a hypersurface is \textit{radial} means that if the vector 
 pointing from the origin points inward.
 Let $\Sigma = \tilde f (\widetilde M)$ be the locally strictly convex hypersurface in 
 $\mathbb R^{n+1}$. Since $\Sigma$ is radial, thus 
\[
 T\mathbb R^{n+1}|_{\Sigma}= T \Sigma \oplus \mathbb R N.
\]
 The standard flat connection $\nabla^0$ on $\mathbb R^{n+1}$ then 
 induces a volume preserving connection $\nabla$ given as
\[
 \nabla_X^0 (Z + \lambda N) = \nabla_X Z + \lambda \nabla_X^0 N +
 (L_X \lambda). N + g(Z, X).N
\]
 for vector fields $X, Z$ on $\Sigma$. Since $\Sigma$ is  strictly locally
 convex and radial, $\nabla_X^0 N = X$ and $g(X, X)>0$. Now 
 the pair $(\nabla, g)$ clearly gives a constant curvature $-1$ statistical 
 manifold structure on $M$.

\bibliographystyle{plain}
\def\cprime{$'$}

\end{document}